\documentclass[11pt]{amsart}
\usepackage[margin=1.1in]{geometry}
\usepackage{amsmath,amssymb,amsthm,mathtools}
\usepackage{enumitem}
\usepackage{booktabs}
\usepackage[colorlinks=true,linkcolor=blue!50!black,citecolor=blue!50!black,urlcolor=blue!50!black]{hyperref}
\usepackage{xcolor}

\newtheorem{theorem}{Theorem}
\newtheorem{lemma}[theorem]{Lemma}

\theoremstyle{definition}

\theoremstyle{remark}
\newtheorem{remark}[theorem]{Remark}

\newcommand{\N}{\mathbb{N}}
\newcommand{\Q}{\mathbb{Q}}
\newcommand{\E}{\mathbb{E}}
\newcommand{\PP}{\mathbb{P}}
\newcommand{\abs}[1]{\lvert #1\rvert}
\newcommand{\floor}[1]{\lfloor #1\rfloor}
\newcommand{\ceil}[1]{\lceil #1\rceil}
\newcommand{\cH}{\mathcal{H}}
\newcommand{\cM}{\mathcal{M}}
\newcommand{\cI}{\mathcal{I}}

\title{A negative answer to Erd\H{o}s Problem \#786}
\author{Shisheng Li}
\date{\today}

\begin{document}

\begin{abstract}
Call a set $A$ of positive integers \emph{admissible} if, whenever $a_1\cdots a_r=b_1\cdots b_s$ with $a_1,\dots,a_r$ distinct elements of $A$ and $b_1,\dots,b_s$ distinct elements of $A$, necessarily $r=s$. Erd\H{o}s asked whether admissible sets can have density $1-\varepsilon$ for every $\varepsilon>0$, and whether $\{1,\dots,N\}$ always contains an admissible subset of size $(1-o(1))N$. For the variant in which repetitions are allowed both questions were answered negatively by Erd\H{o}s, Ruzsa and S\'ark\"ozy and by Granville and Soundararajan; for products of distinct elements, the first question was answered only recently (with density bound $7/8$), and the second has remained open. We show that every admissible $A\subseteq\{1,\dots,N\}$ satisfies $\sum_{a\in A}1/a\le\frac12\log N+(\log\log N+2)^2$, and that there is an absolute constant $\eta>0$ such that every admissible $A\subseteq\{1,\dots,N\}$ has $\abs{A}<(1-\eta)N$ for all large $N$. Both questions therefore have negative answers. The proofs are elementary; the second rests on a coupling that replaces the largest divisor of an integer composed of small primes, which avoids the divisor-function losses inherent in counting quotients along a multiplication table. Both negative answers are formally verified in Lean~4 against the statements of the Formal Conjectures project.
\end{abstract}

\maketitle

\section{Introduction}

For a set $A$ of positive integers consider the property
\begin{equation}\tag{P}
S,T\subseteq A \text{ finite},\quad \prod_{s\in S}s=\prod_{t\in T}t\quad\Longrightarrow\quad\abs{S}=\abs{T}.
\end{equation}
Equivalently, an identity $a_1\cdots a_r=b_1\cdots b_s$ between \emph{distinct} elements $a_i$ of $A$ and distinct elements $b_j$ of $A$ forces $r=s$ (common elements of the two sides may be cancelled). We call such sets \emph{admissible}. Note that $1\notin A$, and that every subset of an admissible set is admissible. The integers $\equiv2\pmod 4$ form an admissible set of density $1/4$, since the $2$-adic valuation counts the factors on each side.

Erd\H{o}s~\cite{Er65,Er80} asked the following (Problem \#786 of~\cite{EP}).
\begin{enumerate}[label=(\roman*)]
\item Is there, for every $\varepsilon>0$, an admissible set of density greater than $1-\varepsilon$?
\item Is there always an admissible $A\subseteq\{1,\dots,N\}$ with $\abs{A}\ge(1-o(1))N$?
\end{enumerate}
Selfridge observed that the integers having exactly one prime factor, counted with multiplicity, from a suitable block of consecutive large primes (with reciprocal sum just below $1$) form an admissible set of density $1/e-\varepsilon$, and Tao observed (see~\cite{EP}) that the integers $n\le N$ with exactly one prime factor, counted with multiplicity, exceeding $N^{1/(1+\sqrt e)}$ form an admissible set of size $(1-c+o(1))N$ with $c=0.1715\ldots$.

If one allows repetitions (so that (P) is required for multisets), then a finite set is admissible precisely when there is a completely additive $F:\N\to\Q$ with $F\equiv1$ on $A$ (by linear duality), and both questions are settled: Erd\H{o}s, Ruzsa and S\'ark\"ozy~\cite{ERS73} obtained the density bound $1/2$, and Tao noted that a theorem of Granville and Soundararajan~\cite{GS01} gives $\abs{A}\le(1-c+o(1))N$ with the constant $c$ above, which is sharp. The formulation in~\cite[p.~114]{Er80}, however, explicitly concerns distinct elements; there Erd\H{o}s states that Ruzsa had shown that both answers are negative, and that an infinite admissible set has upper density $<1/e$. No proof has appeared, and the problem page~\cite{EP} remarks that Erd\H{o}s may have had the repetition-allowed results in mind; the distinct version is listed there as open. Recently Gessel~\cite{Gessel} posted a proof, accompanied by a Lean formalization, that an admissible set with natural density $\delta$ has $\delta\le7/8$, which answers (i); question (ii) remained open. The repetition-free condition is weaker, and there is no additive certificate: for instance $\{2,3,4\}$ is admissible but $2\cdot2=4$.

\begin{theorem}\label{thm:log}
For every $N\ge286$ and every admissible $A\subseteq\{1,\dots,N\}$,
\[
\sum_{a\in A}\frac1a\le\frac12\log N+(\log\log N+2)^2 .
\]
Consequently an admissible set has upper logarithmic density at most $1/2$, and natural density at most $1/2$ whenever the latter exists.
\end{theorem}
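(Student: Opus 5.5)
The plan is to exploit only the simplest forbidden configurations: identities with $r=1$ and $s=2$, together with their consequences for divisor pairs. This is a guess at the route, not a verified argument. If $b,c\in A$ are distinct and $bc\le N$, then $bc\notin A$, since otherwise $bc=b\cdot c$ is an identity with $1\neq 2$ factors. Equivalently, no $a\in A$ has a factorisation $a=de$ with $d\neq e$ and $d,e\in A$. So $A$ is ``product-free'' apart from squares. The extremal example $(\sqrt N,N]$ has $\sum 1/a\sim\frac12\log N$, which suggests the constant $\frac12$ comes from this product-free structure alone, with the stronger property (P) not needed.

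The main step converts product-freeness into a logarithmic-density inequality by a multiplicative pairing argument.
\begin{enumerate}[label=(\alph*)]
\item Write $L(x)=\sum_{a\in A,\,a\le x}1/a$. For each $n\le N$, consider the pairs $(d,n/d)$ of complementary divisors.
\item Use the weight $\sum_{n\le N}\frac1n\sum_{d\mid n}\mathbf 1_A(d)\,\mathbf 1_{\{n/d\in A\}}$. This equals $\sum_{a,b\in A,\,ab\le N}\frac1{ab}$, and every $n$ counted here with $d\neq n/d$ lies outside $A$.
\item Rather than compare this directly with $\log N$, which loses to divisor multiplicities, use a prime-based version. Write each $a\in A$ as $a=p\,m$ with $p$ a prime factor chosen by a fixed rule, say the largest. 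Observe that $p$ and $m$ cannot both lie in $A$ when $p\neq m$.
\item Combine with the ``dual'' observation that if $m\in A$ then $pm\notin A$ for every prime $p\in A$ with $p\ne m$.
\end{enumerate}
This yields a family of exclusions indexed by primes. Summing them with weights $1/p$ should give an inequality of the shape $L(N)\le \log N-L(N)+E$, so $L(N)\le\frac12\log N+E/2$. The error $E$ comes from Mertens-type sums $\sum_{p\le N}1/p=\log\log N+O(1)$ and from the contribution of integers with a repeated or atypical prime factor. Squaring that Mertens sum is how I expect the term $(\log\log N+2)^2$ to arise. The threshold $N\ge 286$ would come from checking the explicit Mertens constants numerically.

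The step I expect to be the main obstacle is making the pairing injective enough. A naive comparison of $\sum_{ab\le N}1/(ab)$ with $\sum_{n\notin A}1/n$ fails because an integer $n$ can be $ab$ in roughly $\tau(n)$ ways. The identity $ab=cd$ is allowed in an admissible set, so those coincidences cannot be ruled out. My first attempt would therefore restrict to pairs in which one factor is prime, or more generally to the factor produced by a fixed decomposition rule. For each $n$, such a rule gives at most $O(1)$ preimages per prime, and the total loss is then controlled by $\sum_p 1/p$ rather than by the divisor function. If this falls short, I would fall back on the full length function given by (P), which is constant on each value: any $n$ expressible as a product of distinct elements of $A$ has a well-defined number of factors. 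I would then use it to separate the images of singletons from those of pairs.

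Finally, the density consequences are immediate. Dividing by $\log N$ gives upper logarithmic density at most $\frac12$. When the natural density $\delta$ exists, partial summation gives $\sum_{a\le N}1/a=(\delta+o(1))\log N$, so $\delta\le\frac12$.
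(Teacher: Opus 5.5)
There is a genuine gap: the premise that product-freeness alone yields the constant $\frac12$ is false, and the inequality $L(N)\le\log N-L(N)+E$ is never actually derived.

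\textbf{Product-freeness does not give $\frac12$.} Your whole strategy rests on the claim that the configurations $a=bc$ (identities with $r=1$, $s=2$) already force the constant $\frac12$. In fact they do not force any constant below $1$. Let $\ell=\log\log N$, $K=\floor{\frac23\ell}$, and $A=\{n\le N:\ K\le\Omega(n)\le 2K-1\}$. For any $b,c\in A$, even $b=c$, we have $\Omega(bc)\ge 2K$, so $A$ is product-free. Now use Rankin's bound
$\sum_{n\le N}z^{\Omega(n)}/n\le\prod_{p\le N}(1-z/p)^{-1}\ll_z(\log N)^z$ for $0<z<2$. With $z=\frac23$ on $\Omega(n)<K$ and $z=\frac43$ on $\Omega(n)\ge 2K$, the complement of $A$ has harmonic mass $O((\log N)^{0.95})$. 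Hence $\sum_{a\in A}1/a=(1-o(1))\log N$, and $(\sqrt N,N]$ is not extremal for product-free sets.

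Replace the window by $[K,(1+\varepsilon)K)$ with $K\approx(1-\varepsilon/3)\ell$. The harmonic mass is still $(1-o(1))\log N$. Any identity $a_1\cdots a_r=b_1\cdots b_s$ with $r<s$ would now need $(r+1)K<r(1+\varepsilon)K$, i.e.\ $r>1/\varepsilon$. So any proof must use identities with unboundedly many factors. Your exclusions (c) and (d) only involve primes lying in $A$, and this $A$ contains no primes, so they can be vacuous. The ``length function'' fallback is too vague to close the gap.

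\textbf{The missing idea.} Look at \emph{ratios}, not factors. For a prime $p$, not necessarily in $A$, let $R_p$ count the $x$ with $x,px\in A$, and put $h=\floor{\log_2 N}$. Suppose every prime factor of some $a=p_1\cdots p_k\in A$ had $R_{p_i}\ge 4h$. Then you could choose pairs $(x_i,p_ix_i)$ greedily with all $2k+1$ integers distinct, and obtain $a\,x_1\cdots x_k=(p_1x_1)\cdots(p_kx_k)$ with $k+1\ne k$ factors; this is Lemma~\ref{lem:lift}, with identities of length about $\log_2 N$. So every $a\in A$ has a \emph{bad} prime factor $p$, meaning $R_p<4h$.

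Removing the upper endpoints $px$ of bad pairs costs harmonic mass at most $H_{4h-1}\sum_{p\le N}1/p\le(\ell+3)(\ell+1)$. That product, plus $1$, is where $(\ell+2)^2$ comes from; it is not a squared Mertens sum. The remaining set $S$ has every element divisible by a bad prime and contains no $x,px$ with $p$ bad. For each $m\le N$, pair the exponent box of the bad primes dividing $m$ into adjacent pairs. This shows at most $\tau(m)/2$ divisors of $m$ lie in $S$. Summing over $m\le N$ gives $H(S)\le\frac12H_N+\frac12$ (Lemma~\ref{lem:packing}).

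Here the comparison with $\tau(m)$ is not a loss: it is exactly what produces $\frac12\log N$. So the divisor-multiplicity obstacle you anticipated is not where the difficulty lies. Your deduction of the density statements is fine once the inequality is established.
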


\begin{theorem}\label{thm:main}
There is an absolute constant $\eta>0$ such that for all sufficiently large $N$, every admissible $A\subseteq\{1,\dots,N\}$ satisfies $\abs{A}<(1-\eta)N$. One may take $\eta=e^{-5000}$ for $N\ge e^{200000}$.
\end{theorem}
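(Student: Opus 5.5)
The plan is a double-counting argument over forbidden configurations. The basic configuration is a triple $\{d,\,m,\,dm\}$ of three distinct integers: the identity $d\cdot m=dm$ compares a product of two distinct elements with a product of one element, so by (P) an admissible $A$ cannot contain all three. Suppose, for contradiction, that $\abs{A}\ge(1-\eta)N$. I would build a random configuration $\{d,m,n\}$ with $n=dm\le N$ and show two things: it is nondegenerate with high probability, and each of its three coordinates has law bounded by $C/N$ pointwise, or at least bounded by $C$ times a measure against which $A$ is known to be nearly full. A union bound then gives $\PP(\{d,m,n\}\not\subseteq A)\le 3C\eta+o(1)<1$, which contradicts admissibility.

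\textbf{Step 1 (choosing the coupling).} The naive choice is a uniform $n\le N$ together with a uniform divisor $d\mid n$. This fails, because the marginals of $d$ and $n/d$ carry divisor-function weights and are not bounded by a constant multiple of the uniform law. Following the hint in the abstract, I would instead take $n$ uniform in $(N/2,N]$ and let $d$ be a canonical divisor: the largest divisor of the $y$-smooth part of $n$ that is at most some threshold $z$, with $y,z$ fixed large constants or slowly varying. Then $m=n/d$ is essentially determined by $n$, and the map $n\mapsto m$ is at most about $z$-to-one onto a range of length about $N/d$. I would check that the law of $m$ has density at most $C$ relative to the uniform measure on $[1,N]$, using that the $y$-smooth part of a random integer has a distribution with bounded "local density" (a Mertens/Rankin type estimate).

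\textbf{Step 2 (the small factor).} The hard point is the coordinate $d$. It lives on small integers, so its law is far from $C/N$, and a dense $A$ is free to omit all small integers. To handle this I would not ask that $d\in A$. Instead I would iterate the configuration, replacing $d$ by a chain: write $d=d_1\cdots d_k$ with each $d_i$ prime, and compare the two products $\{m,\,md_1,\dots\}$ obtained by multiplying $m$ by the $d_i$ one at a time. Identities such as
\[
(md_1)(md_1d_2)\cdots=(m)(md_1d_2)\cdots\ \text{(with one factor merged)}
\]
would be arranged so that all elements involved are of size $\asymp N/\text{const}$ and differ in the number of factors. The coupling is what ensures that every element of the chain has marginal law at most $C/N$ on $[N/z,N]$. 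In that range $A$ misses at most $z\eta N$ elements, so the union bound still works once $\eta<1/(Cz\cdot\#\text{chain})$. This explains the shape $\eta=e^{-5000}$: the constants $y,z$ and the chain length are absolute but large.

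\textbf{Step 3 and the main obstacle.} For the remaining degenerate cases, namely $n$ with a smooth part too small or coincidences among the elements, I would use Theorem~\ref{thm:log}. It bounds the logarithmic mass of $A$ at smaller scales and lets me discard a set of $n$ of proportion $O(1/\log y)$. The main obstacle I expect is Step 2. One must find a configuration that is genuinely unbalanced, so that $r\ne s$, while all its elements have size within a constant factor of $N$. One must also prove the bounded-overlap (marginal density) property for every element simultaneously. If a clean multi-element identity is not available, the fallback is the triple from Step 1 combined with a separate argument, via Theorem~\ref{thm:log} applied at scale $z$, that $A$ must contain a positive proportion of the relevant small factors $d$.
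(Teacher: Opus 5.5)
\textbf{There is a genuine gap, and it is structural.} No single random configuration, chosen independently of $A$ and controlled by a union bound, can work for fixed $\eta$.

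Suppose $a_1\cdots a_r=b_1\cdots b_s$ with $r>s$ (common elements cancelled) and all elements lie in $[N/C,N]$. Then $(N/C)^r\le N^s\le N^{r-1}$, so $r\ge\log N/\log C$. Hence the configuration you want in Step~2 does not exist: unbalanced, of absolutely bounded length, with all elements of size $\asymp N$. Any witness has $\gg\log N$ elements, and you cannot escape by using small elements, since $A$ may omit $[1,\eta N]$ entirely. Against an adversarial $E$ (already a random one of size $\eta N$), the union bound then costs at least $\eta$ per element, so it is $\gg\eta\log N$.

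Your concrete chain fails even earlier. Generically some prime $\ell$ divides $m$ but not $d$, since $d$ is $y$-smooth and $m\ge N/(2z)$. Then every element $mc$ with $c\mid d$ has the same $v_\ell$, so $v_\ell$ counts factors and every identity among them is balanced. This is the mechanism that makes the integers $\equiv2\pmod4$ admissible.

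The fallback is unavailable too. $A$ may omit all integers $\le z$ at negligible cost. Theorem~\ref{thm:log} only bounds $\sum_{a\in A}1/a$ from above, so it cannot force small factors $d$ into $A$; the paper's proof of Theorem~\ref{thm:main} does not use it.

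\textbf{What is missing is the separation of the identity from the probability.}
\begin{enumerate}[label=(\alph*)]
\item \emph{The Lifting Lemma.} Suppose $a\in A$ factors as $a=q_1\cdots q_t$ into rationals that are each \emph{rich}, meaning the pairs $(x,q_jx)$ in $A$ contain a matching of size $\ge2t$. Greedily choosing fresh pairs gives $a\prod x_j=\prod y_j$, with $t+1$ versus $t$ factors. The cofactors $x_j$ differ, which defeats the valuation obstruction. Once richness is known, no union bound over the $2t+1$ elements is needed.
\item \emph{Probability enters only to prove richness, a two-element event.} Fix a non-heavy prime $p\le N^{1/32}$. Split $z$, uniform in $[1,N/p]$, canonically as $z=k_Q(z)u$, where $k_Q(z)$ is the largest divisor of $z$ composed of non-heavy primes $\le p^{1/16}$. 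Pair $pz$ with $mu$ for a random $Q$-smooth $m\in[pk/2,pk)$. Since $b=mu$ determines $m$ and $u$, one gets $\PP(mu=b)\ll1/N$. So both points lie in $A$ with probability $>1/2$, and some ratio $pk/m\in(1,2]$ has $\gg N^{57/64}$ pairs.
\item \emph{Induction to short words.} Write $p=\frac{pk}{m}\cdot m\cdot k^{-1}$ and induct down to the trivially rich primes $\le e^{3200}$. This expresses each non-heavy prime $\le N^{1/32}$ as a word of length $O((\log p)^2)$ in rich ratios.
\item \emph{Choosing $a$.} The bound $\sum_{p\ \mathrm{heavy}}1/p=O(\delta)$ and a smooth-number count supply an $a\in A\cap[N/2,N]$ to which the Lifting Lemma applies.
\end{enumerate}
Your instinct to use a canonical divisor to remove divisor-function weights is exactly the paper's. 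It has to be used to certify single ratios close to $1$, not to place an entire unbalanced configuration inside $A$.
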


Theorem~\ref{thm:main} answers (ii) negatively, and also (i): an admissible set of density $>1-\eta/2$ would contain admissible sets $A\cap[1,N]$ of size $\ge(1-\eta)N$. Theorem~\ref{thm:log} gives the stronger bound $1/2$ for (i), matching the repetition-allowed bound of~\cite{ERS73}. We make no attempt to optimize $\eta$; it seems natural to conjecture, following Tao, that the optimal deficit is $0.1715\ldots$, i.e.\ that the largest admissible subsets of $\{1,\dots,N\}$ have size $(1-0.1715\ldots+o(1))N$, in the distinct setting as well. Some mild numerical evidence points the same way: an exact computation for $N\le60$ (two independent programs, available with~\cite{repo}) shows that for $34\le N\le60$ the largest admissible subset of $\{1,\dots,N\}$ has the same size as in the repetition-allowed version, and for $39\le N\le 60$ an extremal set is $\{n\le N: n$ has exactly one prime factor $\ge5$, counted with multiplicity$\}$, a level set of a completely additive function.

\subsection*{Formal verification}
The negative answers to both questions, in the exact form stated in the Formal Conjectures project~\cite{FC} (\texttt{erdos\_786.parts.i} and \texttt{erdos\_786.parts.ii}, with the answer resolved to \texttt{False}), are proved in Lean~4 with Mathlib; the only analytic input beyond Mathlib is the explicit form of Mertens' theorems, taken (vendored) from the PrimeNumberTheoremAnd project~\cite{PNT}. The formal proof follows Section~\ref{sec:main} with unspecified constants. See Section~\ref{sec:lean}.

\subsection*{Notation}
$H(S)=\sum_{s\in S}1/s$, $H_j=\sum_{n\le j}1/n$; $v_p(n)$ is the exponent of $p$ in $n$; $\tau(n)$ the number of divisors; $p,q$ denote primes. For a positive rational $q\ne1$, a \emph{$q$-pair} in $A$ is a pair $(x,qx)$ with $x,qx\in A$, and a \emph{$q$-matching} is a family of $q$-pairs whose endpoints are pairwise distinct.

\section{Two combinatorial lemmas}

\begin{lemma}[Lifting]\label{lem:lift}
Let $a\in A$ and $a=q_1\cdots q_t$ with positive rationals $q_j\ne1$ (repetitions allowed). If for each $j$ the $q_j$-pairs in $A$ contain a matching of size at least $2t$, then $A$ is not admissible.
\end{lemma}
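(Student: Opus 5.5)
Choose one $q_j$-pair $(x_j,q_jx_j)$ for each $j=1,\dots,t$, all $2t$ endpoints pairwise distinct and all different from $a$. Then
\[
a\cdot\prod_{j=1}^t x_j=\prod_{j=1}^t (q_jx_j),
\]
since $\prod_j q_jx_j=\bigl(\prod_j q_j\bigr)\prod_j x_j=a\prod_j x_j$. The left side is a product of $t+1$ distinct elements of $A$ and the right side a product of $t$ distinct elements of $A$. No element occurs on both sides, because all $2t+1$ numbers $a,x_1,\dots,x_t,q_1x_1,\dots,q_tx_t$ are pairwise distinct. Since $t+1\ne t$, this violates (P), so $A$ is not admissible. (Each $q_jx_j$ is an integer because it lies in $A$.)

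It remains to choose the pairs, which I do greedily in the order $j=1,2,\dots,t$. When choosing the pair for index $j$, the set $U_j$ of forbidden integers consists of $a$ together with the $2(j-1)$ endpoints already used, so $\abs{U_j}=2j-1\le 2t-1$. Fix a $q_j$-matching $M_j$ of size at least $2t$. Its pairs have pairwise disjoint endpoints, so each forbidden integer is an endpoint of at most one pair of $M_j$. Hence at most $2t-1$ pairs of $M_j$ meet $U_j$, and since $\abs{M_j}\ge 2t$, at least one pair of $M_j$ avoids $U_j$. Take that pair as $(x_j,q_jx_j)$. The new endpoints are distinct from each other because $q_j\ne1$, and distinct from all earlier choices and from $a$ by construction.

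Repetitions among the $q_j$ cause no difficulty: if $q_i=q_j$ for $i<j$, the greedy step simply picks a different pair of the same matching, and the count above already accounts for this. The only step that needs care is this counting: the matching hypothesis (disjoint endpoints, not merely many pairs) is exactly what ensures each used integer blocks at most one pair, and the size $2t$ is what covers the $2t-1$ forbidden integers, including $a$ itself.
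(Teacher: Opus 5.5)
Your proposal is correct and is essentially the paper's proof: you pick the pairs greedily from the matchings, note that the $2j-1$ forbidden integers each block at most one pair, and conclude with the identity $a\prod x_j=\prod q_jx_j$ having $t+1\ne t$ factors. The degenerate case $t=0$, which the paper treats separately, is covered automatically by your argument, since (P) with $S=\{a\}$ and $T=\emptyset$ already rules out $a=1$.
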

\begin{proof}
Choose $(x_j,y_j)$, $y_j=q_jx_j$, for $j=1,\dots,t$ in turn from the matching of $q_j$, avoiding $a$ and all endpoints chosen earlier. At step $j$ there are $2j-1$ forbidden integers, each lying in at most one pair of a matching, so a pair is available. Then $a,x_1,y_1,\dots,x_t,y_t$ are distinct elements of $A$ and $a\prod x_j=\prod y_j$, with $t+1\ne t$ factors. (If $t=0$ then $a=1$, which is excluded by (P).)
\end{proof}

\begin{lemma}\label{lem:paths}
Let $q>1$. Any $R$ distinct $q$-pairs contain a $q$-matching of size at least $R/2$.
\end{lemma}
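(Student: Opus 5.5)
We view the $R$ distinct $q$-pairs as the edges of a graph $G$ on the positive rationals (or integers), joining $x$ to $qx$. Since $q>1$ is fixed, each vertex $x$ has at most two neighbours in $G$, namely $x/q$ and $qx$. So $G$ has maximum degree at most $2$, and its connected components are paths or cycles.

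Next we rule out cycles. Along any walk in $G$, each step multiplies the current vertex by $q$ or by $q^{-1}$. A closed walk that traverses distinct edges of a cycle in one consistent direction would therefore return to its starting vertex multiplied by $q^k$, where $k$ is the number of steps taken in the ``up'' direction minus the number taken ``down''. Concretely, the vertices of a component are $x, qx, q^2x,\dots$, with distinct powers of $q$ because $q>1$, so $q^i x\neq q^jx$ for $i\ne j$. Hence every component is a path
\[
x,\ qx,\ \dots,\ q^{m}x
\]
with $m$ edges, and the $m$ edges are exactly the consecutive pairs $(q^{i}x,q^{i+1}x)$.

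On such a path we take every other edge: $(q^{0}x,q^1x),(q^2x,q^3x),\dots$. These $\lceil m/2\rceil\ge m/2$ pairs have pairwise distinct endpoints, so they form a $q$-matching. Matchings chosen in different components are vertex-disjoint. Taking the union over all components therefore gives a $q$-matching of size at least $\sum m/2=R/2$.

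The only point needing care is that each component really is a path of consecutive powers of $q$ times a single base point. We would justify this by choosing, in each component, the vertex $x$ with $x/q$ not in the component. Such a vertex exists because the component is finite and $q>1$, so one can take the minimal element. Every other vertex is then reached by repeatedly multiplying by $q$, since each vertex $y\neq x$ of the component has exactly its neighbours among $y/q$ and $qy$. There is no real obstacle beyond this.
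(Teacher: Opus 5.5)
Your proof is correct and follows the paper's own argument: the graph with edges $\{x,qx\}$ has maximum degree $2$ and no cycles, so it is a disjoint union of paths, and taking alternate edges gives a $q$-matching of size at least $R/2$. Your closed-walk remark does not by itself exclude cycles, since any closed walk has net exponent $0$; what rules them out is your minimal-vertex argument, which is equivalent to the paper's observation that values increase along $x\mapsto qx$.
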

\begin{proof}
The graph with edges $\{x,qx\}$ has maximum degree $2$ and is acyclic (values increase along $x\mapsto qx$), so it is a union of paths; take alternate edges.
\end{proof}

A first consequence: if $R_p=\#\{x:x,px\in A\}$ and $a=p_1\cdots p_k\in A$ with every $R_{p_i}\ge4\log_2N$, then greedily choosing fresh pairs $(x_i,p_ix_i)$ (at step $i$ at most $4i-2$ candidates are blocked) gives $a\prod x_i=\prod p_ix_i$. Since $R_p\ge\floor{N/p}-2(N-\abs A)$, this already forces all $y$-smooth integers with $y\approx N/(2(N-\abs{A}))$ out of $A$; but that yields only a deficit $N\exp(-(1+o(1))\sqrt{\log N\log\log N})$.

\section{Logarithmic density: proof of Theorem~\ref{thm:log}}

\begin{lemma}[Packing]\label{lem:packing}
Let $P$ be a finite set of primes and $S\subseteq\{1,\dots,N\}$ such that every element of $S$ is divisible by some $p\in P$, and $x,px$ never both lie in $S$ for $p\in P$. Then $H(S)\le\frac12H_N+\frac12$.
\end{lemma}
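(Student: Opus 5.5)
The plan is to compare $S$ with its complement through the ``division by a prime'' edges, showing that the reciprocal mass of $S$ is essentially dominated by the reciprocal mass of integers in $\{1,\dots,N\}\setminus S$. Since $S$ and its complement together have mass $H_N$, this gives $H(S)\le\frac12 H_N+O(1)$. The hypotheses supply exactly the two ingredients needed. Every $s\in S$ has at least one neighbour $s/p$ with $p\in P$, $p\mid s$. The forbidden-pair condition says every such neighbour lies outside $S$. So it suffices to build a \emph{fractional} assignment: each $s\in S$ sends its weight $1/s$ to neighbours $s/p\notin S$, and no $m\notin S$ receives more than $1/m$ in total, up to an additive error of at most $\frac12$ overall. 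Then $H(S)\le H(\{1,\dots,N\}\setminus S)+1$, and adding $H(S)$ to both sides gives the claim.

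\textbf{Step 1 (the assignment).} For $s\in S$ write $L(s)=\sum_{p\in P}v_p(s)\log p>0$, the logarithm of the $P$-part of $s$. I would send from $s$ to $m=s/p$ the weight $\frac1s\cdot\frac{v_p(s)\log p}{L(s)}$, which sums to $1/s$ over $p$. This is chosen so that the load on $m$ is controlled by a telescoping or Chebyshev-type identity rather than by $\sum_{p\in P}1/p$, which could be unbounded.

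\textbf{Step 2 (load on a receiver).} A given $m\le N$ receives
\[
\frac1m\sum_{p\in P,\ pm\in S}\frac{(v_p(m)+1)\log p}{p\,(L(m)+\log p)}.
\]
The aim is to show that this is at most $1/m$, except for a controllable set of small $m$ whose total excess is at most $\tfrac12$. The factor $1/p\le\frac12$ already gives the constant $\frac12$ when only one prime is involved. The point of the $\log p/L$ normalisation is to stop many primes $p$ from all charging the same $m$.

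\textbf{Main obstacle.} The critical case is $m$ with no $P$-factors, i.e.\ $L(m)=0$. There the bound degenerates to $\frac1m\sum_{p\in P}\frac1p$, which need not be bounded. I would first try to repair this by reversing the direction for such elements. If $s=pm$ with $m$ $P$-free, then $s$ has a single $P$-prime. In that case I would charge upward, to multiples $s'=qs\notin S$, or better pair $s$ with $m$ only along the chains $\{m,pm,p^2m,\dots\}$. Within a chain, $S$ contains no two consecutive terms, so the chain's $S$-mass is at most the mass of its complement plus the top term. Organising $\{1,\dots,N\}$ into chains $\{p^k m\}$ for a single fixed prime per element, for instance the smallest $P$-prime dividing it, would make the alternation argument exact. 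The difficulty, which I expect to be the crux, is making these chains disjoint when elements have several $P$-primes. I would handle this by induction on the number of distinct $P$-primes, using the $L$-weighted assignment for elements with two or more $P$-primes and the single-chain alternation for the rest. The additive $\frac12$ absorbs the boundary terms coming from chain tops near $N$, whose reciprocal mass is $O(1)$.
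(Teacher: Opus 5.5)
Your proposal has a genuine gap. The one assignment you specify fails, and the repair is left open at exactly the point you call the crux.

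The $L$-weighted downward rule fails at every receiver whose $P$-part is small, not only at $P$-free receivers. For fixed $m$ the factor $\frac{(v_p(m)+1)\log p}{L(m)+\log p}$ tends to $1$ as $p$ grows, so the load on $m$ is essentially $\frac1m\sum_{p\le N/m}\frac1p$. Concretely, take $P$ to be the primes up to $N$ and $S=\{2p:3\le p\le N/2\}$. This $S$ satisfies the hypotheses, and every element has two distinct $P$-primes, which is the class where you intended to use this rule. Yet $m=2$ receives $\frac12\sum_{3\le p\le N/2}\frac{\log p}{p\log 2p}\sim\frac12\log\log N$ against a capacity of $\frac12$.

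More fundamentally, take $S$ to be the set of all primes up to $N$. The only downward neighbour of every element is $1$, so \emph{no} downward rule, with any weights, can work. Mass must be sent upward to products $pq$, and each such receiver is shared by the senders $p$ and $q$. This is a genuinely two-dimensional allocation problem. The chains $\{1,p,p^2,\dots\}$ all share the bottom element $1$, so chain alternation does not address it. The proposed induction on the number of distinct $P$-primes rests on the $L$-weighted rule just shown to fail.

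What is missing is the paper's device of double counting over multiples instead of transporting mass directly. Fix $m\le N$ and write $m=b\prod_i p_i^{e_i}$, with $p_i$ the primes of $P$ dividing $m$. For each $c\mid b$, the divisors $c\prod_i p_i^{f_i}$ form a box $\prod_i\{0,\dots,e_i\}$. In this box $S$ misses the origin (since $c$ is $P$-free) and contains no two adjacent points. Pair the box into adjacent pairs along a coordinate with $e_i$ odd. If every $e_i$ is even, pair the box minus the origin along its first nonzero coordinate via $(1,2),(3,4),\dots$. Either way, $\#\{d\mid m: d\in S\}\le\tau(m)/2$. Summing over $m\le N$ gives $\sum_{s\in S}\lfloor N/s\rfloor\le\frac12\sum_{d\le N}\lfloor N/d\rfloor$. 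The fractional parts $N/d-\lfloor N/d\rfloor$ cost at most $N/2$, which yields $H(S)\le\frac12H_N+\frac12$.

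Read as a transport plan, this sends each $s\in S$ to a neighbour $sp_i$ or $s/p_i$, chosen separately inside each box. It therefore uses upward as well as downward edges. Averaging over $m$ automatically keeps every receiver $d$ within its budget $\lfloor N/d\rfloor/N\le 1/d$. This is exactly the balanced assignment your approach needs but does not construct.
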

\begin{proof}
Fix $m$ and write $m=b\prod_{i\le r}p_i^{e_i}$ with $p_1,\dots,p_r$ the primes of $P$ dividing $m$ and $(b,\prod p_i)=1$. For each $c\mid b$ the divisors $c\prod p_i^{f_i}$, $0\le f_i\le e_i$, form a box $B=\prod\{0,\dots,e_i\}$; the exponent vectors of members of $S$ avoid $0$ and contain no two vectors differing by $1$ in one coordinate. Pair $B$ into adjacent pairs along a coordinate with $e_i$ odd if there is one; otherwise pair $B\setminus\{0\}$ along the first nonzero coordinate via $(1,2),(3,4),\dots$. Hence $\#\{d\mid m:d\in S\}\le\tau(m)/2$. Summing over $m\le N$ gives $\sum_{s\in S}\floor{N/s}\le\frac12\sum_{d\le N}\floor{N/d}$; writing $r_d=N/d-\floor{N/d}\in[0,1)$,
\[
N H(S)\le\tfrac N2H_N+\tfrac12\Big(\sum_{s\in S}r_s-\sum_{d\notin S}r_d\Big)\le\tfrac N2H_N+\tfrac N2 .\qedhere
\]
\end{proof}

\begin{proof}[Proof of Theorem~\ref{thm:log}]
Let $h=\floor{\log_2N}$ and call a prime $p\le N$ \emph{bad} if $R_p<4h$; let $P$ be the (finite) set of bad primes. Every $a\in A$ has a bad prime factor, by the consequence of Lemma~\ref{lem:lift} noted above (for $a=p_1\cdots p_k$ with no bad factor, $k\le h$ and fresh pairs exist). Let $E=\{px:p\in P,\ x,px\in A\}$ and $S=A\setminus E$. Then $S$ satisfies the hypotheses of Lemma~\ref{lem:packing}, so $H(S)\le\frac12H_N+\frac12$, while
\[
H(E)\le\sum_{p\in P}\frac1p\sum_{x:\,x,px\in A}\frac1x\le H_{4h-1}\sum_{p\le N}\frac1p .
\]
By Rosser--Schoenfeld~\cite[Thm.~5]{RS62}, $\sum_{p\le N}1/p\le\log\log N+1$ for $N\ge286$; also $H_{4h-1}\le1+\log(4\log N/\log2)<\log\log N+3$ and $\frac12H_N+\frac12\le\frac12\log N+1$. With $\ell=\log\log N$ we get $H(A)\le\frac12\log N+1+(\ell+3)(\ell+1)=\frac12\log N+(\ell+2)^2$. The statements about densities follow by applying this to $A\cap[1,X]$ and, for natural density, by partial summation.
\end{proof}

\begin{remark}
Harmonic mass is insensitive to $[N/K,N]$ (the interval $[N/\log N,N]$ has $(1-o(1))N$ elements but harmonic mass $\approx\log\log N$), so Theorem~\ref{thm:log} does not bear on question (ii).
\end{remark}

\section{Proof of Theorem~\ref{thm:main}}\label{sec:main}

\subsection{Why a new idea is needed}
A natural approach to (ii) is to connect a large prime $p$ to $1$ through ratios $v/u$ close to $1$ between integers $u,v\le\sqrt N$, and to charge each missing ratio edge to the missing integers $mt$ or $nt$ occurring in the ``trials'' $(mt,nt)$, $m\in(K,2K]$. Carried out in dyadic bands, this charges each failed trial to a divisor of a missing integer, so missing integers are weighted by their number of divisors; one obtains only a deficit bound with a polylogarithmic loss, $N-\abs A\gg N/(\log N)^{C}$. The loss is intrinsic to this kind of charging. By Ford's theorem~\cite[Thm.~1]{Ford08}, uniformly for $3\le K\le\sqrt N$ the number of $n\le N$ having a divisor in $(K,2K]$ is $\asymp N(\log K)^{-\delta_{\rm F}}(\log\log K)^{-3/2}$ with $\delta_{\rm F}=1-\frac{1+\log\log2}{\log2}$, which is $o(N)$ as $K\to\infty$; yet deleting these integers kills every trial $(mt,nt)$ with $m\in(K,2K]$. The proof below avoids this by pairing $pz$ with a partner obtained from $z$ through a \emph{canonical} factorization, so that every integer is hit with essentially uniform probability.

\subsection{Set-up}
Fix $N\ge N_1=\ceil{e^{200000}}$ and $A\subseteq\{1,\dots,N\}$; let $E=\{1,\dots,N\}\setminus A$, $D=\abs E$, $\delta=D/N$, and suppose $\delta\le\eta=e^{-5000}$. Put
\[
P=N^{1/32},\qquad M=\ceil{4(\log N)^2},\qquad c_0=e^{-1400},\qquad p_0=e^{3200}.
\]
A prime $p\le N$ is \emph{heavy} if $\abs{E\cap p\N}\ge N/(16p)$; let $\cH$ be the set of heavy primes and $S=\sum_{p\in\cH}1/p$. A rational $q\ne1$ is \emph{rich} if the $q$-pairs in $A$ contain a matching of size $M$; $q$ is rich iff $q^{-1}$ is. For a set $Q$ of primes, $k_Q(z)$ denotes the largest divisor of $z$ all of whose prime factors lie in $Q$.

\subsection{Step 1: heavy primes}
\begin{lemma}\label{lem:heavy}
$S\le3072\,\delta$. In particular $S+\eta<c_0/2$.
\end{lemma}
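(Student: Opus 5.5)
The plan is a double count of pairs $(e,p)$ with $e\in E$, $p\in\cH$, $p\mid e$, with the loss from integers having many heavy prime factors controlled by Cauchy--Schwarz. The bound comes out of a quadratic inequality in $S$. Define
\[
f(n)=\#\{p\in\cH:\ p\mid n\},\qquad n\le N .
\]

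\emph{Lower bound.} Summing the definition of heaviness over $p\in\cH$ gives
\[
\sum_{e\in E}f(e)=\sum_{p\in\cH}\abs{E\cap p\N}\ \ge\ \frac N{16}\sum_{p\in\cH}\frac1p=\frac{NS}{16}.
\]

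\emph{Upper bound.} By Cauchy--Schwarz,
\[
\Big(\sum_{e\in E}f(e)\Big)^2\le D\sum_{n\le N}f(n)^2 .
\]
Expanding $f(n)^2$ as a sum over ordered pairs $(p,q)$ of heavy primes dividing $n$, and counting multiples of $p$ (when $p=q$) or of $pq$ (when $p\ne q$) in $[1,N]$, we get
\[
\sum_{n\le N}f(n)^2=\sum_{p\in\cH}\Big\lfloor\frac Np\Big\rfloor+\sum_{\substack{p\ne q\\ p,q\in\cH}}\Big\lfloor\frac N{pq}\Big\rfloor\le N(S+S^2).
\]

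\emph{Combining.} The two bounds give $N^2S^2/256\le DN(S+S^2)$. If $S=0$ there is nothing to prove; otherwise dividing by $N^2S$ yields
\[
\delta\ \ge\ \frac{S}{256(1+S)} .
\]
If $S>1$, this gives $\delta\ge 1/512$, contradicting $\delta\le\eta=e^{-5000}$. Hence $S\le1$, and then $\delta\ge S/512$, i.e.
\[
S\le 512\,\delta\le 3072\,\delta .
\]

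\emph{Second claim.} From the first part,
\[
S+\eta\le 3072\eta+\eta=3073\,e^{-5000}<\tfrac12 e^{-1400}=c_0/2 .
\]

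The only step needing any care is the second-moment bound $\sum_{n\le N} f(n)^2\le N(S+S^2)$. Without it, the naive bound $\sum_{e\in E}f(e)\le D\max_e\omega(e)$ loses a factor of order $\log N/\log\log N$. Cauchy--Schwarz against the average of $f^2$ over all $n\le N$ removes this loss, because integers with many heavy prime factors are rare on average. No arithmetic input beyond counting multiples in $[1,N]$ is required.
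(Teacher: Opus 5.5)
Your proof is correct and is essentially the paper's argument: count the incidences $(e,p)$ with $p\mid e$ using heaviness, and bound $\sum_{e\in E}\omega(e)$ by Cauchy--Schwarz against a second moment of $\omega$ taken over all $n\le N$. The only difference is that you apply Cauchy--Schwarz to the raw moment $\sum_{n\le N}\omega(n)^2\le N(S+S^2)$ instead of the centered one $\E(\omega-S)^2\le 3S$. This spares you the first-moment lower bound $\E\omega\ge S-\abs{\cH}/N$ and gives the sharper $S\le 512\,\delta$, at the cost of using that $\delta$ is tiny to rule out $S>1$.
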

\begin{proof}
Let $\omega(n)=\#\{p\in\cH:p\mid n\}$. Averaging over $n\le N$, $\E\omega\ge S-\abs\cH/N$ and $\E\omega^2\le S+S^2$, so $\E(\omega-S)^2\le S+2S\abs{\cH}/N\le3S$. By Cauchy--Schwarz $\sum_{n\in E}\omega(n)\le DS+\sqrt{3DNS}$, whereas heaviness gives $\sum_{n\in E}\omega(n)=\sum_{p\in\cH}\abs{E\cap p\N}\ge NS/16$. Hence $(\frac1{16}-\delta)S\le\sqrt{3\delta S}$, i.e.\ $S\le3\delta/(\frac1{16}-\delta)^2\le3072\delta$.
\end{proof}

\subsection{Step 2: smooth numbers}
\begin{lemma}\label{lem:smooth}
If $y\ge e^{200}$ and $y^{16}\le X\le y^{56}$, then at least $c_0X$ integers in $[X/2,X]$ are $y$-smooth.
\end{lemma}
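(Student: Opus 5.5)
The plan is to exhibit many $y$-smooth integers in $[X/2,X]$ of a rigid shape $n=p_1\cdots p_k\,m$. Here the $p_i$ are primes in $(y^{1/2},y]$ and $m<y^{1/2}$ is an arbitrary integer. Such an $n$ is automatically $y$-smooth. Moreover, $n$ determines the multiset $\{p_1,\dots,p_k\}$ as its prime factors exceeding $y^{1/2}$ (with multiplicity), and hence determines $m$. So each $n$ arises from at most $k!$ ordered tuples, and it suffices to lower-bound the number of ordered tuples by $k!\,c_0X$.

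Write $u=\log X/\log y\in[16,56]$ and $t_i=\log p_i/\log y\in(\tfrac12,1]$. I will fix $k=\ceil{2u}$, possibly adjusted by one, so that the target window
\[
W=\Big\{(t_1,\dots,t_k)\in(\tfrac12,1]^k:\ u-\tfrac12\le \textstyle\sum t_i\le u-\tfrac38\Big\}
\]
is a nondegenerate region. For $(t_i)\in W$ we have $\Pi=\prod p_i\in[X y^{-1/2},Xy^{-3/8}]$. Hence every integer $m\in[X/(2\Pi),X/\Pi]$ satisfies $m\le y^{1/2}$, and the interval has length $X/(2\Pi)\ge y^{3/8}/2\ge1$. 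The number of tuples is therefore at least
\[
\sum_{(p_i):\,(t_i)\in W}\Big(\frac{X}{2\Pi}-1\Big)\ \ge\ \frac X4\sum_{(p_i):\,(t_i)\in W}\frac1{p_1\cdots p_k}.
\]
Thus the problem reduces to a lower bound, by an absolute constant, for a $k$-fold sum of $1/p$ over primes whose normalized logarithms lie in a slab of the cube.

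For this sum I will use the explicit Mertens theorem (Rosser--Schoenfeld, as already used in the proof of Theorem~\ref{thm:log}). Split $(\tfrac12,1]$ into $L$ equal subintervals $I_j$ for a fixed moderate $L$. By explicit Mertens, $\sum_{p:\,\log p/\log y\in I_j}1/p=\log\frac{\sup I_j}{\inf I_j}+O(1/\log^2 y^{1/2})$, and this is at least $\tfrac12\log\frac{\sup I_j}{\inf I_j}$ since $y\ge e^{200}$. Restrict to tuples of cells $(I_{j_1},\dots,I_{j_k})$ whose whole box lies inside $W$. The sum is then at least $2^{-k}$ times the $dt/t$-measure of this union of boxes, which in turn is at least the Lebesgue measure of the union. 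A choice such as $L=16$ ensures that the boxes inside $W$ cover a fixed fraction of $W$, and $\mathrm{vol}(W)$ is at least $e^{-O(k)}$. This holds because the sum of $k$ uniform variables on $(\tfrac12,1]$, with mean $3k/4\approx u\cdot\tfrac32\cdot\ldots$, must hit the target band; I will choose $k$ so that $u-\tfrac7{16}$ is near $3k/4$, i.e.\ $k\approx\tfrac43u\le 75$, rather than $2u$, placing the band at the centre of the distribution. There its density is of order $k^{-1/2}$, easily bounded below by a local estimate or by an explicit convolution count.

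Collecting the factors, the number of $y$-smooth integers in $[X/2,X]$ is at least $X\cdot\frac14\cdot 2^{-k}\cdot\mathrm{vol}\cdot\frac1{k!}$. With $k\le 75$ we have $k!\le e^{250}$ and $2^{-k}\ge e^{-53}$, which sits comfortably above $c_0=e^{-1400}$. The main obstacle is bookkeeping rather than ideas. The hardest step is the explicit lower bound on the prime sum over the slab $W$, which requires carefully controlling the Mertens error terms across all $L^k$ cells, together with the discretization loss at the boundary of $W$. I would handle it by taking cells slightly inside the slab so that the error terms never interact with the boundary. Beyond that, one must check that the side conditions ($m\ge1$, $m<y^{1/2}$, $X/2\le n\le X$) hold uniformly over $u\in[16,56]$. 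The generous constant $e^{-1400}$ leaves ample room for crude choices at every step.
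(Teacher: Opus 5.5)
Your reduction is sound. With $p_i\in(y^{1/2},y]$ and $m\le y^{1/2}$, the prime factors of $n$ above $y^{1/2}$ recover $\{p_i\}$, so the multiplicity is at most $k!$, and the cofactor count $X/(2\Pi)-1\ge X/(4\Pi)$ is right.

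\textbf{The discretization step fails.} With $L=16$ each cell has width $\frac1{32}$. So on a box $I_{j_1}\times\cdots\times I_{j_k}$ the sum $\sum t_i$ sweeps an interval of length $k/32$. Since $k\ge u-\frac12>15$, this length is at least $\frac12$, four times the width $\frac18$ of $W$ in that direction. Hence no box lies inside $W$, the union you restrict to is empty, and your lower bound is $0$ rather than a fixed fraction of $\mathrm{vol}(W)$.

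A wider slab would not cure this, because $1\le m\le y^{1/2}$ already confines $\sum t_i$ to an interval of length at most $\frac12$; the cells must shrink like $1/k$. Conversely, the Mertens errors do not ``interact'' across cells at all, since your bound is a product of per-cell bounds. The only issue is the scale of a single cell.

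\textbf{Repair.} Take cells of width $w=1/(24k)$. Then the boxes with $\sum_i\inf I_{j_i}\in[u-\frac12,\,u-\frac38-kw]$ lie in $W$ and cover the inner slab $\{u-\frac12+kw\le\sum t_i\le u-\frac38-kw\}$, of width $\frac1{24}$. You must redo the Mertens check at this scale:
\begin{itemize}
\item a cell $(t,t+w]$ has $dt/t$-mass $\log(1+w/t)\ge\log(1+w)\ge5\cdot10^{-4}$ for $k\le75$;
\item the Rosser--Schoenfeld error is at most $2/(t\log y)^2\le2\cdot10^{-4}$.
\end{itemize}
So each cell sum is still at least half its $dt/t$-mass. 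This now uses that $k$ is bounded by an absolute constant, and it uses $y\ge e^{200}$ with little room to spare.

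\textbf{A simpler route, and the paper's proof.} You can skip the volume and local-limit estimate and use a single box: all $t_i\in(\tau,\tau+w]$ with $k\tau=u-\frac12$, $kw=\frac18$, and $k\approx\frac43u$ as you proposed. This yields at least $\frac{X}{4\,k!}\bigl(\log(1+w)-2\cdot10^{-4}\bigr)^k$ smooth integers, roughly $e^{-750}X$ for $k=75$.

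That single-box argument is essentially the paper's proof. The paper takes exactly $112$ primes from one interval $(y^a,y^b]$ with $112a=u-1$ and $112b=u-\frac12$, and allows a cofactor $r\le y$. It bounds the multiplicity crudely by $448^{112}$, via the prime factors of $n$ exceeding $y^{1/8}$, and ends near $e^{-1278}X$.

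Your split at $y^{1/2}$ buys the cleaner multiplicity bound $k!$ and a better constant. The slab-volume machinery, however, is unnecessary.
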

\begin{proof}
Let $u=\log X/\log y$, $a=(u-1)/112$, $b=(u-\frac12)/112$, so $\frac18<a<b<\frac12$ and $b/a\ge\frac{111}{110}$. By~\cite[Thm.~5]{RS62}, $\abs{\sum_{p\le z}1/p-\log\log z-B_1}\le1/\log^2z$ for $z\ge286$, whence the primes $\cI$ in $(y^a,y^b]$ satisfy $\sum_{q\in\cI}1/q\ge\log\frac{111}{110}-\frac{128}{200^2}>\frac1{200}$. For an ordered $112$-tuple from $\cI$ with product $Q$ we have $X/y<Q\le X/\sqrt y$, and the $\ge X/(3Q)$ integers $r\in[X/(2Q),X/Q]$ give $y$-smooth $n=rQ\in[X/2,X]$. There are at least $\frac X3(\sum_{q\in\cI}1/q)^{112}$ such representations, and each $n\le y^{56}$ has at most $448$ prime factors exceeding $y^{1/8}$, hence at most $448^{112}$ representations. So there are at least $X/(3\cdot89600^{112})>e^{-1400}X$ such $n$.
\end{proof}

\subsection{Step 3: the $Q$-part is usually small}
\begin{lemma}\label{lem:kQ}
Let $y\ge e^{200}$, $Q$ a set of primes $\le y$, and $z$ uniform in $[1,T]$. Then $\PP(k_Q(z)>y^{40})\le\frac1{10}$.
\end{lemma}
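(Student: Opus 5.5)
The plan is to use Markov's inequality for $\log k_Q(z)$ rather than a Rankin-type moment bound. Rankin's trick, with $\sum_{k>Y,\ k\ y\text{-smooth}}1/k\le Y^{-\sigma}\prod_{p\le y}(1-p^{\sigma-1})^{-1}$, loses a factor of size $\log y$ from the Euler product. That is fatal here, since $y$ is unbounded and the target is the absolute constant $\frac1{10}$. The first moment avoids this loss: $\log k_Q(z)$ has mean about $\log y$, and the threshold $40\log y$ is forty times larger.

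First step: bound the expectation. Write $\log k_Q(z)=\sum_{p\in Q}v_p(z)\log p=\sum_{p\in Q}\sum_{j\ge1}\log p\cdot\mathbf 1[p^j\mid z]$. For $z$ uniform in $\{1,\dots,T\}$ we have $\PP(p^j\mid z)=\floor{T/p^j}/T\le p^{-j}$. Hence
\[
\E\log k_Q(z)\le\sum_{p\le y}\log p\sum_{j\ge1}p^{-j}=\sum_{p\le y}\frac{\log p}{p-1}=\sum_{p\le y}\frac{\log p}{p}+\sum_{p\le y}\frac{\log p}{p(p-1)} .
\]
Here we drop the restriction to $Q$, since the terms are nonnegative. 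By Rosser--Schoenfeld~\cite{RS62}, $\sum_{p\le y}\log p/p<\log y$. The second sum converges to a constant below $0.76$, which we can bound crudely, for instance by $\sum_{n\ge2}\log n/(n(n-1))$ or by a short numerical estimate. So $\E\log k_Q(z)\le\log y+1$.

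Second step: apply Markov's inequality. Since $\log k_Q(z)\ge0$,
\[
\PP\big(k_Q(z)>y^{40}\big)\le\frac{\log y+1}{40\log y}\le\frac{1+1/200}{40}<\frac1{10},
\]
using $\log y\ge200$. If $T<y^{40}$ the probability is zero anyway, so no case split is really needed.

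The main (mild) obstacle is only the explicit constant: we need a clean explicit upper bound for $\sum_{p\le y}\log p/(p-1)$ of the form $\log y+O(1)$ with a stated constant. Rosser--Schoenfeld's bound $\sum_{p\le y}\log p/p<\log y$, together with a crude bound on the tail sum $\sum_p\log p/(p(p-1))$, suffices. The margin is huge (we get about $1/40$ against the required $1/10$), so even $\E\log k_Q(z)\le 4\log y$ would do.
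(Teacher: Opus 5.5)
Your proof is correct and is essentially the paper's. Both bound $\E\log k_Q(z)$ by $\sum_{q\le y}\frac{\log q}{q-1}$ and apply Markov at level $40\log y$; the paper bounds this sum elementarily by $3\log y+7\le4\log y$ (Chebyshev's $\vartheta(t)<3t$ plus partial summation, landing exactly on $\frac{4}{40}=\frac1{10}$), while you use the sharper Rosser--Schoenfeld bound $\sum_{p\le y}\frac{\log p}{p}<\log y$. One harmless slip: your ``crude'' bound $\sum_{n\ge2}\frac{\log n}{n(n-1)}$ is about $1.26$, so on its own it gives $\log y+2$ rather than $\log y+1$, which still leaves Markov far below $\frac1{10}$.
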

\begin{proof}
Chebyshev's bound $\vartheta(t)<3t$ (from $\prod_{n<p\le2n}p\mid\binom{2n}n$) and partial summation give $\sum_{q\le y}\frac{\log q}{q-1}\le3\log y+7\le4\log y$. Hence $\E\log k_Q(z)=\sum_{q\in Q}\sum_{j\ge1}(\log q)\floor{T/q^j}/T\le4\log y$, and Markov's inequality gives the claim.
\end{proof}

\subsection{Step 4: small primes}
Every prime $p\le p_0$ is rich: there are at least $\floor{N/p}-2D\ge N/(2p_0)$ $p$-pairs, hence a matching of size $N/(4p_0)\ge M$ by Lemma~\ref{lem:paths}. We also use $N^{57/64}/8\ge M$ and $P\ge p_0$, valid for $\log N\ge200000$.

\subsection{Step 5: replacing the largest smooth part}
\begin{lemma}\label{lem:core}
Let $p$ be a non-heavy prime with $p_0<p\le P$, $y=p^{1/16}$, and $Q$ the set of non-heavy primes $\le y$. There are $Q$-smooth integers $k,m$ with $k\le y^{40}$ and $pk/2\le m<pk$ such that $pk/m$ is rich.
\end{lemma}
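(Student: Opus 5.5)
The plan is to use a \emph{canonical} factorization $z=k_Q(z)\,w$ and an averaging argument. For $z\le N/p$ set $k=k_Q(z)$ and $w=z/k$, so $w$ has no prime factor in $Q$. For each $Q$-smooth $k\le y^{40}$, let $\cM_k$ be the set of $Q$-smooth $m\in[pk/2,pk)$, and attach to $z$ the candidate pair $(mw,\,pkw)=(mw,\,pz)$ with $m\in\cM_k$. This is a $pk/m$-pair whenever both endpoints lie in $A$. The aim is to find a single pair $(k,m)$ for which many $w$ succeed. Distinct $w$ give distinct pairs, so Lemma~\ref{lem:paths} then yields a matching of size $M$, and $pk/m$ is rich.

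\emph{Step 1: $\cM_k$ is large.} Put $X=pk$, so $y^{16}\le X\le y^{56}$ (as $p=y^{16}$ and $k\le y^{40}$). By Lemma~\ref{lem:smooth} at least $c_0X$ integers in $[X/2,X]$ are $y$-smooth. Those divisible by a heavy prime $\le y$ number at most $\sum_{q\in\cH}(X/(2q)+1)\le SX/2+y$. By Lemma~\ref{lem:heavy} this is $\le c_0X/4$, giving $\abs{\cM_k}\ge c_0pk/2$.

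\emph{Step 2: count failures with weights $1/\abs{\cM_k}$.} Choose $z\le N/p$ uniformly. Condition on $k_Q(z)\le y^{40}$, which holds with probability $\ge 9/10$ by Lemma~\ref{lem:kQ}. Then choose $m\in\cM_k$ uniformly. A trial fails if $pz\in E$ or $mw\in E$.
\begin{itemize}
\item Since $p$ is not heavy, $\PP(pz\in E)<1/16$.
\item For the second event, $n=mw$ determines $m=k_Q(n)$ and $w$, because $m$ is $Q$-smooth and $w$ is $Q$-free. Hence
\[
\PP(mw\in E)\le\frac pN\sum_{n\in E}\ \sum_{k:\ pk/2\le k_Q(n)<pk}\frac1{\abs{\cM_k}}\le\frac pN\sum_{n\in E}\ \sum_{k\in(m/p,\,2m/p]}\frac{2}{c_0pk}.
\]
The inner sum over a dyadic range is $O(1/(c_0p))$ (it is at most $2(\log2+p/m)/(c_0p)$, and $p/m\le 2/k\le2$). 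So $\PP(mw\in E)=O(\delta/c_0)$, which is tiny because $\delta\le\eta\ll c_0$.
\end{itemize}
Thus the success probability is at least $9/10-1/16-O(\eta/c_0)>1/2$.

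\emph{Step 3: pigeonhole.} The success probability is an average over pairs $(k,m)$, so some $(k,m)$ has at least half of its $\#\{z\le N/p:k_Q(z)=k\}$ trials successful. The obstacle is that this trial count could be small for an unlucky $k$, since $Q$-free $w$ have density only about $1/\log y$. To handle it, I would restrict beforehand to $k$ whose trial count is at least a small constant times $N/(pk\log y)$. The discarded $k$ carry negligible probability mass: their contribution is $\ll \sum_k 1/(k\log y)$, made small by shrinking the constant. Hence the chosen $(k,m)$ has $\gg N/(pk\log N)\ge N^{57/64}/(8\log N)$ successful $w$. These are distinct $pk/m$-pairs, and since $pk/m>1$, Lemma~\ref{lem:paths} gives a matching of size $\ge M$ by the stated numerical inequalities. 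The main difficulty is Step 2, bounding $\PP(mw\in E)$: the canonical choice $m=k_Q(mw)$, together with $k$ being confined to a dyadic window determined by $m$, is exactly what prevents the divisor-function loss.
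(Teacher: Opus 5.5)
Your proof is correct and is essentially the paper's argument: the same canonical decomposition $z=k_Q(z)\,w$, the same bound $\abs{\cM_k}\ge c_0pk/2$, and the same three failure events, with $m=k_Q(mw)$ confining $k$ to a dyadic window. There are two harmless slips: non-heaviness only gives $\PP(pz\in E)<\frac{N/(16p)}{\floor{N/p}}\le\frac18$, and your factor $p/N$ should be $1/\floor{N/p}\le 2p/N$.

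The one difference is in your Step 3, where the paper avoids your obstacle by plain pigeonhole. Let $T=\floor{N/p}$ and let $s_{k,m}$ be the number of successful $w$. Then $\sum_k\max_m s_{k,m}>T/2$ over at most $y^{40}=p^{5/2}$ values of $k$, so some $s_{k,m}\ge T/(2y^{40})\ge N^{57/64}/4$. This needs no lower bound on $\#\{z:k_Q(z)=k\}$, and no Mertens-type estimate to discard bad $k$.
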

\begin{proof}
For $Q$-smooth $k\le y^{40}$ let $\cM_k$ be the set of $Q$-smooth $m\in[pk/2,pk]$. Since $y^{16}\le pk\le y^{56}$, Lemma~\ref{lem:smooth} gives $c_0pk$ $y$-smooth integers in $[pk/2,pk]$, of which at most $pkS$ are divisible by a heavy prime; by Lemma~\ref{lem:heavy}, $\abs{\cM_k}\ge\gamma pk$ with $\gamma=c_0/2$. As $p\notin Q$, $pk\notin\cM_k$.

Let $T=\floor{N/p}$ and let $z$ be uniform in $[1,T]$; write $z=ku$ with $k=k_Q(z)$. If $k\le y^{40}$, pick $m\in\cM_k$ uniformly and form the pair $(mu,\,pku)=(mu,pz)$, both $\le N$. Failures:
(a) $k>y^{40}$, probability $\le\frac1{10}$ by Lemma~\ref{lem:kQ};
(b) $pz\in E$, probability $<\frac{N/(16p)}{T}\le\frac18$ as $p$ is not heavy;
(c) $mu\in E$. Given $b=mu$, $m$ is the largest $Q$-smooth divisor of $b$ (as $u$ has no prime factor in $Q$), so $m$ and $u$ are determined by $b$, and $z=ku$ with $m/p\le k\le2m/p$. Hence
\[
\PP(mu=b)\le\frac1{T\gamma p}\sum_{m/p\le k\le2m/p}\frac1k\le\frac{2}{T\gamma p}\le\frac4{\gamma N},
\]
and $\PP(mu\in E)\le4\delta/\gamma\le8e^{-3600}<\frac18$.
So with probability $>\frac12$ both $mu$ and $pz$ lie in $A$. For $Q$-smooth $k\le y^{40}$ and $m\in\cM_k$ let $s_{k,m}$ be the number of $u$ with $ku\le T$, $k_Q(ku)=k$, $mu\in A$ and $pku\in A$. The success probability equals $\frac1T\sum_k\frac1{\abs{\cM_k}}\sum_{m\in\cM_k}s_{k,m}\le\frac1T\sum_k\max_{m}s_{k,m}$, so $\sum_k\max_ms_{k,m}>T/2$; as there are at most $y^{40}$ values of $k$, some $(k,m)$ has $s_{k,m}\ge T/(2y^{40})\ge N/(4p^{7/2})\ge N^{57/64}/4$; the corresponding $(mu,pku)$ are distinct $(pk/m)$-pairs, and Lemma~\ref{lem:paths} shows that $pk/m$ is rich.
\end{proof}

The uniqueness of the decomposition $z=k_Q(z)\cdot u$ is what removes the divisor-function weight: with an arbitrary divisor in place of $k_Q(z)$, a given $b$ would be reached through each of its divisors.

\subsection{Step 6: short words}
\begin{lemma}\label{lem:words}
Every non-heavy prime $p\le P$ is a product of at most $16(\log p)^2$ rich rationals and inverses of rich rationals.
\end{lemma}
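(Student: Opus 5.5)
We argue by strong induction on $p$, with a length bound $L(p)$ that we must keep below $16(\log p)^2$. Call a rational a \emph{word of length $\ell$} if it is a product of $\ell$ factors, each rich or the inverse of a rich rational. Words are closed under products and inverses, and lengths add.

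\emph{Base cases.} For $p\le p_0$ Step~4 shows that $p$ itself is rich, so $L(p)=1$. More generally every prime $q\le p_0$ has length $1$.

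\emph{Inductive step.} Let $p_0<p\le P$ be non-heavy, and apply Lemma~\ref{lem:core}. It gives $Q$-smooth $k,m$ with $k\le y^{40}$, $y=p^{1/16}$, and $pk/m$ rich. Then
\[
p=\frac{pk}{m}\cdot\frac{m}{k},
\]
where $m$ and $k$ are products of non-heavy primes $q\le y=p^{1/16}$. Each such $q$ is either $\le p_0$ (length $1$) or a non-heavy prime in $(p_0,P]$ smaller than $p$, to which the induction hypothesis applies. The number of prime factors of $m$ and $k$ counted with multiplicity is at most $\log_2(mk)\le\log_2(p\,y^{80})=\log_2 p^{6}\le 9\log p$. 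Hence
\[
L(p)\le 1+9\log p\cdot\max_{q\le p^{1/16}}L(q).
\]
With $L(q)\le16(\log q)^2\le16(\log p)^2/256$ this gives $L(p)\le1+\tfrac{9}{16}(\log p)^2$, well below $16(\log p)^2$. The primes $q\le p_0$ contribute length $1$, which is also fine since $9\log p\le16(\log p)^2$.

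\emph{Main obstacle.} The crude count above is the only delicate point. I must check that the multiplicity bound uses $\Omega(n)\le\log_2 n$ and that the inductive exponent gain $1/16$ squared (a factor $1/256$) beats the $9\log p$ prime factors. It does, since the recursion is $L(p)\lesssim 9\log p\cdot L(p^{1/16})$, which unwinds to roughly $(\log p)^{1+o(1)}$, so the claimed quadratic bound has ample room.

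I should also confirm that every prime in $Q$ is non-heavy and at most $P$, so that the hypothesis applies. This holds by the definition of $Q$ and because $y<p\le P$. The induction is then well-founded over primes in increasing order.
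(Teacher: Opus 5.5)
Your set-up matches the paper: strong induction, the base case from Step~4, the identity $p=\frac{pk}{m}\cdot\frac{m}{k}$ from Lemma~\ref{lem:core}, and the observation that all prime factors of $k,m$ are non-heavy and $\le p^{1/16}<p$. The inductive step, however, does not close.

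\textbf{The arithmetic error.} You have $\Omega(mk)\le 9\log p$ and $L(q)\le 16(\log q)^2\le(\log p)^2/16$. Their product is
\[
9\log p\cdot\frac{(\log p)^2}{16}=\frac{9}{16}(\log p)^3,
\]
not $\frac{9}{16}(\log p)^2$. Since $\log p>\log p_0=3200$, this is far larger than $16(\log p)^2$, so the induction hypothesis is not recovered.

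\textbf{The heuristic is also wrong.} Write $\ell(x)$ for the length bound at $x=\log p$. The recursion $\ell(x)\le 9x\,\ell(x/16)$ unwinds only to $\log\ell(x)\asymp(\log x)^2$. That is quasi-polynomial growth $x^{c\log x}$, not $x^{1+o(1)}$. So no fixed power of $\log p$ survives the crude bound ``number of prime factors times the longest word''.

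\textbf{The missing idea.} Do not combine the two worst cases independently. Many prime factors force those factors to be small, and small primes have short words. So weight each prime factor by its own word length. Writing $\lambda(v)$ for the word length of a $Q$-smooth $v$,
\[
\lambda(v)\le\sum_q v_q(v)\,16(\log q)^2\le 16\log y\sum_q v_q(v)\log q=(\log p)\log v .
\]
Here one factor of $\log q$ is bounded by $\log y=\frac1{16}\log p$, and the other is absorbed by $\sum_q v_q(v)\log q=\log v$. Primes $q\le p_0$ have length $1\le 16(\log q)^2$, so they satisfy the same bound.

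Then $p=\frac{pk}{m}\cdot m\cdot k^{-1}$ has length at most
\[
1+(\log p)(\log m+\log k)\le 1+(\log p)(\log p+2\log k)\le 1+6(\log p)^2\le 16(\log p)^2 .
\]
This uses $m<pk$ and $k\le y^{40}=p^{5/2}$. This is the paper's argument, and with this one change your proof goes through.
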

\begin{proof}
Induct on $p$. For $p\le p_0$ use Step 4. Otherwise take $k,m$ from Lemma~\ref{lem:core}; their prime factors are non-heavy and $\le p^{1/16}<p$. Concatenating the words of the prime factors, a $Q$-smooth $v$ has a word of length at most $16\sum_qv_q(v)(\log q)^2\le(\log p)\log v$. Writing $p=\frac{pk}m\cdot m\cdot k^{-1}$ and using $k\le p^{5/2}$, $m\le pk$, the length is at most $1+(\log p)(\log m+\log k)\le1+6(\log p)^2\le16(\log p)^2$.
\end{proof}

\subsection{Step 7: conclusion}
Apply Lemma~\ref{lem:smooth} with $y=P$, $X=N$: at least $c_0N$ integers in $[N/2,N]$ are $P$-smooth; at most $NS$ are divisible by a heavy prime and at most $D$ lie in $E$, so some $a\in A\cap[N/2,N]$ has all its prime factors non-heavy and $\le P$. By Lemma~\ref{lem:words}, $a=q_1\cdots q_t$ with each $q_j$ rich (all rich ratios are $\ne1$ by definition) and $t\le16(\log P)\log a\le\frac12(\log N)^2$, so $M\ge2t$, and Lemma~\ref{lem:lift} shows that $A$ is not admissible. This proves Theorem~\ref{thm:main}. \qed

\section{Formalization}\label{sec:lean}
The Lean~4 development~\cite{repo} (Lean \texttt{v4.33.1}, Mathlib commit \texttt{0df444a}) depends only on Mathlib; the explicit Mertens estimates
$\sum_{p\le x}\frac{\log p}{p}=\log x+O(1)$ and $\sum_{p\le x}\frac1p=\log\log x+M+O(1/\log x)$, with explicit error terms, are vendored from PrimeNumberTheoremAnd~\cite{PNT} and replace~\cite{RS62}. It proves:
\begin{itemize}
\item \texttt{Erdos786.main}: there is $\eta>0$ such that for all large $N$ every $A\subseteq\{1,\dots,N\}$ with $\abs A\ge(1-\eta)N$ contains distinct $a_1,\dots,a_r,b_1,\dots,b_s$ with $r\ne s$ and $\prod a_i=\prod b_j$. The proof follows Section~\ref{sec:main} with unspecified constants: Lemma~\ref{lem:lift} and a greedy version of Lemma~\ref{lem:paths} (matching of size $\ge R/3$), Step~1 (\texttt{heavy\_recip\_sum\_le}), Step~2 (\texttt{smooth\_count}), Step~3 (\texttt{qPart\_tail}), Step~5 in counting form (\texttt{core\_pairs}), Step~6 (\texttt{short\_word}) and the assembly (\texttt{main\_of}).
\item \texttt{erdos\_786.parts.ii} and \texttt{erdos\_786.parts.i} of~\cite{FC}, in the form \texttt{False} $\leftrightarrow$ (statement), where the statements and all definitions they use (\texttt{Set.IsMulCardSet}, \texttt{Set.HasDensity}, \texttt{Set.partialDensity}) are copied verbatim. Part~(i) is deduced from \texttt{main} as explained after Theorem~\ref{thm:main}.
\end{itemize}
\texttt{\#print axioms} reports only \texttt{propext}, \texttt{Classical.choice} and \texttt{Quot.sound}. Theorem~\ref{thm:log} is not formalized.

\section*{Acknowledgements}
This work was carried out with substantial assistance from AI systems (OpenAI's GPT models and Anthropic's Claude), including the development of the proofs and the Lean formalization. The author takes full responsibility for the content. We thank T.~Tao for the discussion on the problem page~\cite{EP}.

\end{document}